\date{\today}
\newcounter{thm}
\newtheorem{lem}[thm]{Lemma}
\newtheorem{cor}[thm]{Corollary}
\newtheorem{theorem}[thm]{Theorem}
\theoremstyle{definition}
\newtheorem{df}[thm]{Definition}
\newcounter{rys}
\newcommand{\p}{\partial}
\renewcommand{\d}{\mathrm{d}}
\newcommand{\eqnb}{\begin{equation}}
\newcommand{\eqnbs}{\begin{equation*}}
\newcommand{\eqnbsa}{\begin{equation*}\begin{aligned}}
\newcommand{\eqnba}{\begin{equation}\begin{aligned}}
\newcommand{\eqnbl}[1]{\begin{equation}\label{#1}}
\newcommand{\eqnbal}[1]{\begin{equation}\label{#1}\begin{aligned}}
\newcommand{\eqnes}{\end{equation*}}
\newcommand{\eqne}{\end{equation}}
\newcommand{\eqnesa}{\end{aligned}\end{equation*}}
\newcommand{\eqnea}{\end{aligned}\end{equation}}
\newcommand{\comment}[1]{}
\newcommand{\RR}{\mathbb{R}}
\begin{document}
\setcounter{rys}{0}


\begin{center}
 \begin{spacing}{1.3}
     \Large\textbf{A generalised comparison principle for the Monge--Amp\`ere equation and the pressure in 2D fluid flows}
  \end{spacing}
  Wojciech S. Ozanski
  \end{center}
 \begin{center}
\Large{\textbf{Abstract}}
\end{center}
\nocite{larcheveque}
\nocite{larcheveque1}
We extend the \emph{generalised comparison principle} for the Monge--Amp\`ere equation due to Rauch \& Taylor (\emph{Rocky Mountain J. Math.} 7, 1977) to nonconvex domains. From the generalised comparison principle we deduce bounds (from above and below) on solutions of the Monge--Amp\`ere equation with sign-changing right-hand side. As a consequence, if the right-hand side is nonpositive (and does not vanish almost everywhere) then the equation equipped with constant boundary condition has no solutions. In particular, due to a connection between the two-dimensional Navier--Stokes equations and the Monge-Amp\`ere equation, the pressure $p$ in 2D Navier-Stokes equations on a bounded domain cannot satisfy $\Delta p \leq 0$ in $\Omega $ unless $\Delta p \equiv 0$ (at any fixed time). As a result at any time $t>0$ there exists $z\in \Omega $ such that $\Delta p (z,t) =0$. 

\setcounter{page}{1}

\section{Introduction}\label{motivation_section}
The Monge-Amp\`ere equation is
\[
\mathrm{det}\, D^2\phi=f.
\]
When the right-hand side $f$ of this equation is positive it constitutes an example of a nonlinear second order elliptic equation (see, for example, Chapter 17 of \cite{gilbarg_trudinger}) and the study of the Dirichlet boundary value problem for this equation goes back to the works of \cite{alexandrov_58}, \cite{bakelman_57} and \cite{pogorelov_ma} and it is related to the prescribed Gaussian curvature problem and the Monge-Kantorovich mass transfer problem. Important results in this theory include Alexandrov maximum principle (Aleksandrov, 1968\nocite{aleksandrov_68}), the equivalence between the notion of the generalised solution and the notion of viscosity solution (Caffarelli, 1989\nocite{caffarelli_89}), and, most notably, the interior regularity results (see Caffarelli, 1990\nocite{caffarelli_w2p_estimates}). See \cite{monge_ampere} for a modern exposition of the theory of the Monge-Amp\`ere equation.

Moreover the Monge-Amp\`ere equation with positive right-hand side $f$ shares many striking similarities with the Laplace equation; take for instance the fact that both the Laplace operator $\Delta \phi$ and the determinant of the Hessian $\mathrm{det}\, D^2 \phi$ are invariant under orthogonal transformations, the similarity between the comparison principle (see Corollary \ref{comparison_cor}) and the maximum principle for subharmonic functions, or the occurence of Perron's method in finding solutions to the Dirichlet boundary value problem.

However, very little is known about the Monge-Amp\`ere equation when the right-hand side $f$ changes sign since in this case it is a (nonlinear) mixed elliptic-hyperbolic problem. A step in this direction is a generalised comparison principle (see Theorem \ref{gen_comp_princ}), first studied by \cite{rauch_taylor} in the case of strictly convex domain $\Omega$. Their study was partially motivated by the problem of the buckling thin elastic shell, which gives rise to the homogeneous Monge--Amp\`ere equation $M\phi=0$ in $\Omega$ equipped with Dirichlet boundary condition $\left. \phi \right|_{\partial \Omega}=g$. The generalised comparison principle enabled them to establish bounds (above and below) on a solution to this problem, which in the case $g\equiv 0$ established uniqueness of the solution $\phi\equiv 0$ to the problem $M\phi =0$ in $\Omega$, $\left. \phi \right|_{\partial \Omega} =0$ (the standard standard existence and uniqueness theorem (see Theorem \ref{existence_thm}) gives uniqueness only among convex solutions). This result filled a gap in the uniqueness problem of the elastic shell (see the first section of \cite{rauch_taylor} and Remark 2.2 in \cite{rabinowitz}).
Moreover, thanks to the generalised comparison principle one can deduce pointwise bounds, above and below, to the solution of the Monge--Amp\`ere equation with a sign-changing right-hand side (see Corollary \ref{mainthm}).

Here we further extend this comparison principle to cover the case of nonconvex domains $\Omega$ and we point out an interesting application in the theory of the two-dimensional Navier--Stokes equations.

In the next section we recall some background theory of the Monge--Amp\`{e}re equation. In Section \ref{gen_comp_princ_section} we prove the generalised comparison principle and discuss its consequences (bounds on the solution of the Monge--Amp\`ere equation). In the last section (Section \ref{application_NS_section}) we discuss the link between the two-dimensional Navier--Stokes equations and the Monge-Amp\`ere equation and we use the bounds on solution to the Monge-Amp\`ere equation to show that the pressure $p$ in 2D Navier-Stokes equations on a bounded domain cannot satisfy $\Delta p\leq 0$, $\Delta p\not \equiv 0$ at any $t>0$.
\section{Preliminary material}
Let $\Omega $ be a bounded, open subset of $\mathbb{R}^n$. We will use a number of properties of convex functions (and concave functions), the Monge--Amp\`ere measure and the Monge--Amp\`ere equation. In this section we quickly recall the relevant definitions and results; the proofs can be found in the first chapter of \cite{monge_ampere}.\\

A function $\phi\colon \Omega \to \RR$ is convex if $\phi(\lambda x + (1-\lambda )y) \leq \lambda \phi(x) + (1-\lambda )\phi (y)$ for every segment $[x,y]\subset \Omega$ and $\lambda \in [0,1]$. If $\phi \in C^2(\overline{\Omega })$ then $\phi$ is convex in $\Omega$ if and only if $D^2 \phi$ is positive definite in $\Omega$. A set $\Omega$ is \emph{convex} if $\lambda x+(1-\lambda )y \in \overline{\Omega }$ for all $x,y \in \overline{\Omega }$, $\lambda \in [0,1]$; it is \emph{strictly convex} if $\lambda x+(1-\lambda )y \in {\Omega }$ for all $x,y \in \overline{\Omega }$, $\lambda \in (0,1)$. A \emph{supporting hyperplane} If for some for $\phi$ at $x_0\in \Omega$ is an affine function $\phi(x_0) + m\cdot (x-x_0)$ such that 
\[
\phi (x) \geq \phi (x_0) + m\cdot (x-x_0) \quad \text{ for all } x\in \Omega . 
\]
\begin{df}
The \emph{normal mapping} of $\phi$ (or \emph{subdifferential} of $\phi$) is the set-valued mapping $\partial \phi : \Omega  \to \mathcal{P} (\mathbb{R}^n )$, which maps $x_0\in \Omega$ into the set of all those $m$ for which $\phi(x_0)+m\cdot (x-x_0)$ is a supporting hyperplane. Namely
\[
\partial \phi (x_0) \coloneqq \{ m \in \mathbb{R}^n \, : \, \phi(x) \geq \phi(x_0) + m\cdot (x-x_0) \quad  \text{ for all } x \in U_{x_0} \},
\]
where $U_{x_0}$ denotes some open neighbourhood of $x_0$. Given $E \subset \Omega $ we define 
\[ \partial \phi (E) = \cup_{x\in E } \, \partial \phi (x).\]
\end{df}
A convex function $\phi$ has at least one supporting hyperplane at each point, that is $\p \phi (x_0) \ne \emptyset$ for all $x_0\in \Omega$. If $\phi \in C(\overline{\Omega })$ then the family of sets 
\[ \mathcal{S}:= \{ E \subset \Omega \, : \, \partial \phi (E)\text{ is Lebesgue measurable} \}
\]
is a Borel $\sigma$-algebra.
\begin{df}
The set function $M\phi: \mathcal{S} \to \overline{\mathbb{R}}$ defined by 
\[ M\phi (E) := | \partial \phi (E) | ,\]
where $\overline{\mathbb{R}} := \mathbb{R} \cup \{ \infty \}$ is the \emph{Monge--Amp\`ere measure} of $\phi$. 
\end{df}
In a sense, $M\phi (E)$ measures ``how convex'' $\phi$ is on $E$. Moreover this measure is finite on compact subsets of $\Omega$ and it satisfies the following three properties.
\begin{enumerate}
\item[(i)] It can be verified using Sard's Theorem that if $\phi\in C^2 (\overline{ \Omega })$ then $M\phi$ is absolutely continuous with respect to the Lebesgue measure and $M\phi (E) = \int_E \mathrm{det}\, D^2 \phi \, \mathrm{d}x$ for all Borel sets $E\subset \Omega $. In particular, if $\phi(x) := \delta |x-x_0|^2$ for some ${x_0\in \Omega }$, $\delta >0$ then $D^2 \phi =2\delta I$, where $I$ denotes the unit matrix, and so 
\[ M\phi (E) = \int_E \mathrm{det}\, D^2 \phi \, \mathrm{d}x = (2\delta )^n |E| \]
for every Borel set $E$.
\item[(ii)] If $\phi,\psi$ are convex functions then $M(\psi+\phi)  \geq M\psi + M\phi$. In particular, adding a constant function has no effect on the Monge--Amp\`ere measure. On the other hand, adding a quadratic polynomial $\delta |x-x_0|^2$ to a convex function $\psi $ strictly increases its Monge--Amp\`{e}re measure, that is
\[\widetilde{\psi } := \psi + \delta |\cdot - x_0|^2 \quad \Rightarrow \quad M\widetilde{\psi} (E) \geq M\psi (E) +  (2\delta )^n |E| \quad \]
for every Borel $E$.
\item[(iii)] If $\Omega \subset \RR^n$ is a bounded open set and $\phi, \psi \in C(\overline{\Omega })$ are such that $\phi = \psi $ on $\partial \Omega$ with $\psi \leq \phi $ then $\partial \phi (\Omega ) \subset \partial \psi (\Omega )$ and hence also
\[
M \phi (\Omega ) \leq M\psi (\Omega ) ,
\]
see Figure \ref{max_princ_figure}. The proof of this property is in fact an algebraic translation of the figure, see \cite{monge_ampere}, pp. 10-11.
\end{enumerate}
\begin{center}
 \includegraphics[width=\textwidth]{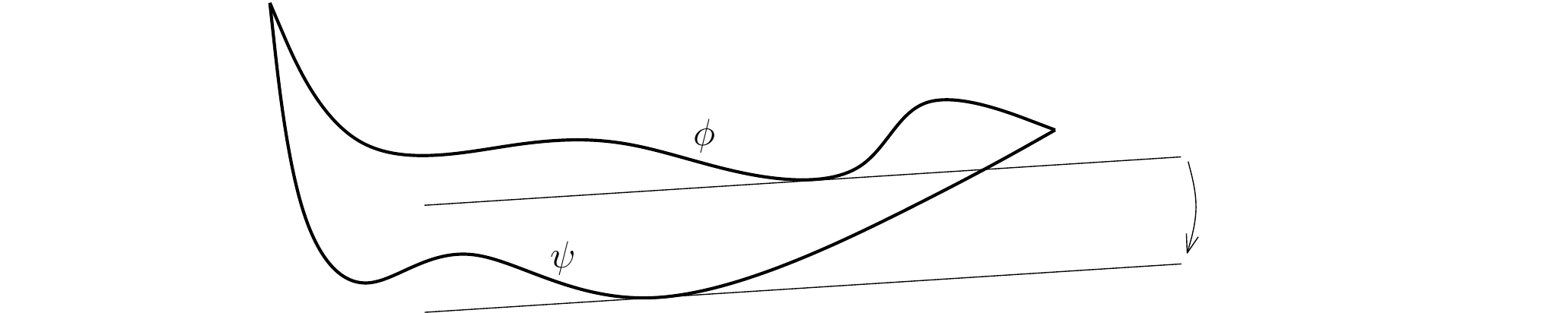}
 \nopagebreak
  \captionof{figure}{$\partial \varphi (\Omega ) \subset \partial \phi (\Omega )$ (based on Fig. 1.2 from \cite{monge_ampere}; note this is a 1D sketch of a multidimensional situation).}\label{max_princ_figure} 
\end{center}
The comparison principle (Corollary \ref{comparison_cor}) is an important tool in studying the Monge--Amp\`ere equation. Here we present a stronger version of the comparison principle. We focus on the case of convex functions; the case of concave functions follows analogously by replacing, respectively,  $M\phi$, $M\psi$ by $M(-\phi)$, $M(-\psi)$ and ``minimum'' by ``maximum''.
\begin{theorem}\label{comparison_thm}{\normalfont (Strong comparison principle)} Let $\Omega $ be open and bounded and ${\phi,\psi \in C(\overline{\Omega } )}$ be convex functions such that
\begin{equation}\label{comparison_assumption}
M\phi  \leq M\psi  \qquad \text{ on } \Omega .
\end{equation}
If $\widetilde{\psi } \coloneqq \psi + Q$ for some quadratic polynomial $Q(x) \coloneqq \delta |x-x_0|^2$ then $\phi - \widetilde{\psi }$ does not attain its minimum inside $\Omega$.
\end{theorem}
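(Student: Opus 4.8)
The plan is to argue by contradiction, adapting the quadratic‑perturbation argument that underlies the usual comparison principle to a localised setting. Suppose $\phi-\widetilde\psi$ attains its minimum, with value $m$, at some $z\in\Omega$, so that $\phi\ge\widetilde\psi+m$ on $\overline\Omega$ with equality at $z$ (one could normalise to $m=0$ by passing to $\phi-m$, since $M(\phi-m)=M\phi$, but I will keep $m$). Fix any $\epsilon\in(0,\delta)$ and a radius $R$ with $0<R<\mathrm{dist}(z,\partial\Omega)$, and introduce the competitor
\[
w(x):=\widetilde\psi(x)+m+\epsilon\bigl(R^2-|x-z|^2\bigr).
\]
Subtracting $\epsilon|x-z|^2$ serves two purposes. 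Since $\widetilde\psi=\psi+\delta|x-x_0|^2$ and $\epsilon<\delta$, the quadratic part of $w$ is $\delta|x-x_0|^2-\epsilon|x-z|^2$, which after completing the square equals $(\delta-\epsilon)|x-\xi|^2+\mathrm{const}$ for a suitable $\xi$; hence $w=\psi+(\delta-\epsilon)|x-\xi|^2+\mathrm{const}$ is convex, and by property (ii) above (additive constants not affecting the Monge--Amp\`ere measure) together with \eqref{comparison_assumption},
\[
Mw(E)\ \ge\ M\psi(E)+\bigl(2(\delta-\epsilon)\bigr)^n|E|\ \ge\ M\phi(E)+\bigl(2(\delta-\epsilon)\bigr)^n|E|\qquad\text{for every Borel }E\subset\Omega .
\]

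Next I would record the two sign facts that make $w$ useful. At the contact point, $w(z)=\widetilde\psi(z)+m+\epsilon R^2=\phi(z)+\epsilon R^2>\phi(z)$; while for $x\in\partial\Omega$ one has $|x-z|\ge\mathrm{dist}(z,\partial\Omega)>R$, so $R^2-|x-z|^2<0$ and hence $w(x)<\widetilde\psi(x)+m\le\phi(x)$. Therefore $G:=\{x\in\Omega:\ w(x)>\phi(x)\}$ is a nonempty open set (it contains $z$) whose closure avoids $\partial\Omega$, so $G\Subset\Omega$; moreover $w=\phi$ on $\partial G$ by continuity, while $w\ge\phi$ on $\overline G$. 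Applying property (iii) above on the domain $G$ to the pair $(w,\phi)$, with $w$ in the role of the larger function, gives $Mw(G)\le M\phi(G)$. Because $\overline G$ is a compact subset of $\Omega$, both measures are finite on $G$, so combining this with the displayed lower bound for $Mw$ (taken with $E=G$) yields $M\phi(G)+\bigl(2(\delta-\epsilon)\bigr)^n|G|\le M\phi(G)$, i.e.\ $|G|=0$, which contradicts that $G$ is nonempty and open. The concave case follows by replacing $\phi,\psi$ with $-\phi,-\psi$ and ``minimum'' with ``maximum'', as indicated before the statement.

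The only genuinely delicate step — and the one I expect to be the crux — is the design of $w$, which must simultaneously (i) stay convex, which forces $\epsilon<\delta$ and thereby ``spends'' exactly the term $\delta|x-x_0|^2$ carried by $\widetilde\psi$; (ii) retain a strictly positive surplus of Monge--Amp\`ere mass over $M\psi$, hence over $M\phi$; (iii) strictly exceed $\phi$ at the interior contact point; and (iv) lie strictly below $\phi$ on $\partial\Omega$, so that the competitor region $G$ is compactly contained in $\Omega$. A downward paraboloid $-\epsilon|x-z|^2$ centred at $z$ and lifted by $\epsilon R^2$ with $R<\mathrm{dist}(z,\partial\Omega)$ is tailored to meet all four requirements at once. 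Everything else is routine bookkeeping with the preliminary material: completing the square, checking that restricting $w$ and $\phi$ to $\overline G$ does not alter their normal mappings on the open set $G$, and the finiteness $M\phi(\overline G)<\infty$.
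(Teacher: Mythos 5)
Your proof is correct and follows essentially the same route as the paper's: both arguments build a convex competitor by subtracting from $\widetilde{\psi}$ a small downward paraboloid centred at the interior minimiser $z$ and adding a constant so that the competitor strictly exceeds $\phi$ at $z$ but lies strictly below it on $\partial\Omega$, then apply properties (ii) and (iii) on the resulting set $G$ to reach the contradiction $|G|=0$. The only cosmetic difference is your choice $\epsilon\left(R^2-|x-z|^2\right)$ with $\epsilon<\delta$ and $R<\mathrm{dist}(z,\partial\Omega)$, versus the paper's tangent paraboloid $\widetilde{Q}$ of coefficient $\delta/2$ together with the shift $(a+b)/2$.
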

This theorem will be important in obtaining our generalised comparison principle for nonconvex domains (see  Theorem \ref{gen_comp_princ}). We prove it by sharpening the proof of the standard comparison principle, see e.g. \cite{monge_ampere}, pp. 16-17.
\begin{proof}
Suppose that there exists $z \in \Omega$ such that
\[
\phi (z)-\widetilde{\psi} (z) = \min_{\overline{\Omega }} (\phi - \widetilde{\psi}) =: a
\]
and let 
\[\widetilde{Q}(x) \coloneqq \frac{\delta }{2} | x - (2x_0 -z )|^2 - \delta |z-x_0|^2 .
\]
This quadratic polynomial is tangent to $Q(x)$ at $z$ and supports it from below, that is $\widetilde{Q}(z)=Q(z)$ and $\widetilde{Q}(x)<Q(x)$ for $x\ne z$. Indeed, direct calculation gives \mbox{$\widetilde{Q}(z)=Q(z)$}, $\nabla \widetilde{Q} (z) = \nabla Q(z)$, $D^2(Q-\widetilde{Q})=\delta I$ and so Taylor's expansion for $x\ne z$ gives
\[
\left( Q - \widetilde{Q} \right) (x) = (x-z) \cdot \frac{\delta I}{2} (x-z) = \frac{\delta}{2} | x-z|^2 >0.
\]
Hence in particular $\left. \widetilde{Q} \right|_{\partial \Omega } < \left. {Q} \right|_{\partial \Omega }$ and we obtain 
\[ b\coloneqq \min_{\partial \Omega } \left( \phi - \psi - \widetilde{Q} \right) > \min_{\partial \Omega } \left( \phi - \psi - {Q} \right) \geq a,
\]
see Figure \ref{strong_comp_princ_figure}. Now let
\[
w(x) \coloneqq \psi (x) + \widetilde{Q} (x) + \frac{b+a}{2}
\]
and
\[
G \coloneqq \{ x\in \Omega \, :\, \phi (x) < w (x) \}.
\]
We see that $z \in G$ and so $G$ is a nonempty open subset of $\Omega$. 
\begin{center}
 \includegraphics[width=0.6\textwidth]{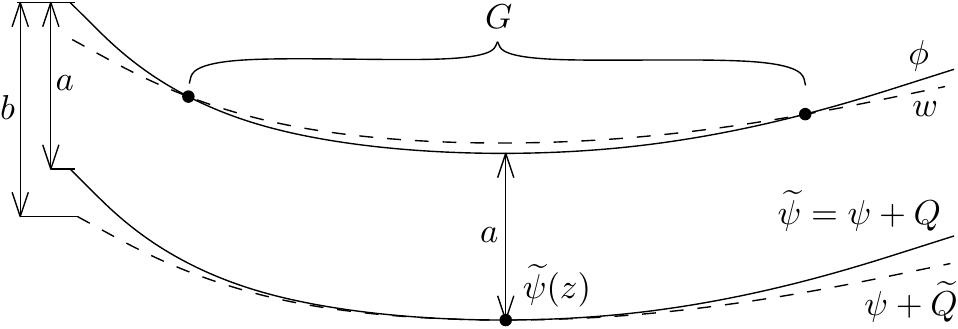}
 \nopagebreak
  \captionof{figure}{The set $G$ (note this is a 1D sketch of a multidimensional situation).}\label{strong_comp_princ_figure} 
  \vspace{0.2cm}
\end{center}
Hence $|G|>0$ and property (ii) gives
\eqnb\label{temp32}
M\psi (G) + \delta^n |G| \leq Mw (G). 
\eqne
Moreover $\partial G = \{ x \in \Omega \, : \, w(x) = \phi (x) \} $ (see Figure \ref{strong_comp_princ_figure}). Indeed, this is equivalent to  $\overline{G} \cap \partial \Omega = \emptyset$, but for $y\in \partial \Omega$ we have
\[
\phi (y) - \psi (y) - \widetilde{Q} (y) \geq b > \frac{b+a}{2},
\]
that is $\phi (y) > w(y)$ and so $y\not \in \overline{G}$. Therefore indeed $\partial G = \{ x \in \Omega \, : \, w(x) = \phi (x) \} $ and hence property (iii) gives 
\[
 M w (G) \leq M \phi (G).
\]  
This and \eqref{temp32} gives $M\psi (G) < M \phi (G)$, which contradicts the assumption \eqref{comparison_assumption}. 
\end{proof}
The standard comparison principle (Theorem 1.4.6 in \cite{monge_ampere}) is a corollary of Theorem \ref{comparison_thm}.
\begin{cor}\label{comparison_cor}{\normalfont (Comparison principle)} Let $\Omega $ be open and bounded and $\phi,\psi \in C(\overline{\Omega } )$ be convex functions such that $
M\phi  \leq M\psi$ in $\Omega $. Then
\[
\min_{x\in \overline{\Omega } } (\phi-\psi) (x) =\min_{x\in \partial \Omega  } (\phi-\psi) (x)  .
\]
In particular, if $\phi \geq \psi$ on $\partial \Omega$ then $\phi \geq \psi$ in $\overline{\Omega }$.
\end{cor}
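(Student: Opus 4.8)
The plan is to deduce the corollary from the strong comparison principle (Theorem~\ref{comparison_thm}) by a vanishing-perturbation argument. Fix any point $x_0\in\RR^n$ and, for $\delta>0$, set $Q_\delta(x)\coloneqq \delta|x-x_0|^2$ and $\widetilde{\psi}_\delta\coloneqq \psi+Q_\delta$. Since $\phi$ and $\psi$ are convex and satisfy $M\phi\leq M\psi$ in $\Omega$, Theorem~\ref{comparison_thm} applies and tells us that the continuous function $\phi-\widetilde{\psi}_\delta$ does not attain its minimum over the compact set $\overline{\Omega}$ at any interior point. As $\phi-\widetilde{\psi}_\delta$ is continuous on the compact set $\overline{\Omega}$ it does attain a minimum somewhere, necessarily on $\partial\Omega$, so
\[
\min_{\overline{\Omega}}\bigl(\phi-\psi-Q_\delta\bigr)=\min_{\partial\Omega}\bigl(\phi-\psi-Q_\delta\bigr)\qquad\text{for every }\delta>0.
\]

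Next I would let $\delta\to 0^+$. Because $\Omega$ is bounded there is $R<\infty$ with $|x-x_0|\leq R$ for all $x\in\overline{\Omega}$, hence $0\leq Q_\delta\leq \delta R^2$ uniformly on $\overline{\Omega}$ and in particular $\|Q_\delta\|_{C(\overline{\Omega})}\to 0$ as $\delta\to 0^+$. Uniform smallness of the perturbation gives the two-sided bounds $\min_{\overline{\Omega}}(\phi-\psi)-\delta R^2\leq \min_{\overline{\Omega}}(\phi-\psi-Q_\delta)\leq \min_{\overline{\Omega}}(\phi-\psi)$, and likewise with $\partial\Omega$ in place of $\overline{\Omega}$. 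Letting $\delta\to 0^+$ in the displayed identity and squeezing then yields $\min_{\overline{\Omega}}(\phi-\psi)=\min_{\partial\Omega}(\phi-\psi)$, which is the first assertion.

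For the ``in particular'' clause, if $\phi\geq\psi$ on $\partial\Omega$ then $\min_{\partial\Omega}(\phi-\psi)\geq 0$, and by the identity just proved $\min_{\overline{\Omega}}(\phi-\psi)\geq 0$, i.e.\ $\phi\geq\psi$ on $\overline{\Omega}$.

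I do not expect a genuine obstacle here: the argument is short, and the only two points requiring care are (a) the invocation of Theorem~\ref{comparison_thm}, which needs exactly the hypotheses at hand (convexity of $\phi,\psi$ and $M\phi\leq M\psi$), and (b) the interchange of $\min$ with the limit $\delta\to 0^+$, which is immediate from the uniform estimate $\|Q_\delta\|_{C(\overline{\Omega})}\leq \delta R^2$ afforded by the boundedness of $\Omega$. If anything, the subtle part is purely conceptual — recognising that the ``strict'' conclusion of Theorem~\ref{comparison_thm} for the perturbed functions $\psi+Q_\delta$ degenerates, in the limit, into the non-strict boundary-minimum statement for $\psi$ itself.
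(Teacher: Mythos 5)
Your proof is correct and uses essentially the same idea as the paper: perturb $\psi$ by a small quadratic $\delta|x-x_0|^2$ and invoke the strong comparison principle (Theorem~\ref{comparison_thm}). The only difference is organisational — the paper argues by contradiction (taking $x_0$ to be a putative interior minimiser and noting the perturbed difference still has an interior minimum for small $\delta$), whereas you run a direct limiting argument in $\delta\to 0^+$; both are valid and of the same depth.
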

\begin{proof}
Suppose otherwise that there exists an $x_0 \in \Omega$ such that
\[
\phi (x_0) - \psi (x_0) =  \min_{x\in \overline{\Omega } } (\phi-\psi) (x) < \min_{x\in \partial \Omega  } (\phi-\psi) (x) .
\]
Because $\Omega $ is bounded, for sufficiently small $\delta >0$ the function $\phi-(\psi +\delta |x-x_0|^2)$ still attains its minimum inside $\Omega$, which contradicts the strong comparison principle (Theorem \ref{comparison_thm}).
\end{proof}
If $\mu$ is a Borel measure defined in $\Omega$ we say that a convex function $v\in C(\Omega )$ is a \emph{generalised solution} to the Monge--Amp\`{e}re equation $\mathrm{det}\, D^2 v = \mu$ if $Mv=\mu$. If $v$ is concave, it is a generalised solution of $\mathrm{det}\, D^2 v = \mu$ when $M(-v)=\mu$. We have the following existence and uniqueness result for the Dirichlet problem for the Monge--Amp\`ere equation (Theorem 1.6.2 in \cite{monge_ampere}).
\begin{theorem}\label{existence_thm}{\normalfont (Existence theorem for the Monge--Amp\`{e}re equation)}
If $\Omega \subset \mathbb{R}^n$ is open, bounded and strictly convex, $\mu$ is a Borel measure in $\Omega$ with $\mu ( \Omega ) < +\infty $ and $g\in C(\partial \Omega )$, then there exists a unique convex generalised solution $\psi \in C(\overline{\Omega})$ to the problem \[ \begin{cases}
\mathrm{det}\, D^2 \psi = \mu \qquad & \text{ in } \Omega , \\
\psi =g &\text{ on } \partial \Omega .
\end{cases}\]
Similarly there exists a unique concave generalised solution to this problem.
\end{theorem}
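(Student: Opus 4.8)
I would get uniqueness straight from the comparison principle and existence by Perron's method in two steps (atomic $\mu$ first, then a limit); besides the material in the excerpt this also uses two standard facts from the basic theory, namely Aleksandrov's maximum principle and the weak-$\ast$ continuity of the Monge--Amp\`ere measure under local uniform convergence of convex functions. \emph{Uniqueness.} If $\psi_1,\psi_2\in C(\overline\Omega)$ are convex generalised solutions with $\psi_1=\psi_2=g$ on $\partial\Omega$, then $M\psi_1=\mu=M\psi_2$, so $M\psi_1\le M\psi_2$ and $M\psi_2\le M\psi_1$ on $\Omega$; Corollary~\ref{comparison_cor}, used with the two functions in either order, gives $\psi_1\ge\psi_2$ and $\psi_2\ge\psi_1$ on $\overline\Omega$, hence $\psi_1=\psi_2$. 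The concave case is the same argument applied to $-\psi_i$ with boundary datum $-g$.

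\emph{Existence, Step 1: $\mu=\sum_{i=1}^N\alpha_i\delta_{x_i}$, $x_i\in\Omega$, $\alpha_i>0$.} Consider the Perron family
\[
\mathcal F:=\{\,v\in C(\overline\Omega):v\text{ convex},\ v\le g\text{ on }\partial\Omega,\ Mv\ge\mu\text{ in }\Omega\,\},\qquad u:=\sup_{v\in\mathcal F}v .
\]
$\mathcal F\ne\emptyset$: for each $i$ take the cone function $c_i$ over $\Omega$ with rim $0$ on $\partial\Omega$ and apex below $x_i$, whose depth is fixed by the intermediate value theorem (using the boundedness of $\Omega$) so that $|\partial c_i(x_i)|=\alpha_i$; since $c_i$ is affine along every segment from $x_i$ to $\partial\Omega$, the determinant of its Hessian vanishes off $x_i$ and $Mc_i=\alpha_i\delta_{x_i}$, so $v_0:=\sum_i c_i-C$ with $C:=\max(0,-\min_{\partial\Omega}g)$ lies in $\mathcal F$ by property~(ii). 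Writing $\overline g$ for the convex function with $M\overline g=0$, $\overline g=g$ on $\partial\Omega$, Corollary~\ref{comparison_cor} gives $v\le\overline g$ for every $v\in\mathcal F$, so $u$ is a well-defined convex function bounded above by $\overline g$. Boundary datum: $u\le\overline g=g$ on $\partial\Omega$; conversely, strict convexity of $\Omega$ provides at each $\xi\in\partial\Omega$ a supporting hyperplane of $\overline\Omega$ meeting it only at $\xi$, which can be tilted and lowered into an affine $\ell_\xi\le g$ on $\partial\Omega$ with $\ell_\xi(\xi)$ arbitrarily close to $g(\xi)$, and then $\max(\ell_\xi,v_0-D)\in\mathcal F$ (for $D$ large) has value $\approx g(\xi)$ at $\xi$, so $u(\xi)\ge g(\xi)$; the same two barriers also give $u\in C(\overline\Omega)$. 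Finally $Mu=\mu$: one has $Mu\ge\mu$ because $\mathcal F$ is closed under finite maxima and local uniform limits, so at each $x_i$ a maximising sequence converges to some $v_\infty\le u$ with $v_\infty(x_i)=u(x_i)$, whence $\partial v_\infty(x_i)\subseteq\partial u(x_i)$ and $Mu(\{x_i\})\ge Mv_\infty(\{x_i\})\ge\alpha_i$; for the reverse inequality one uses the standard Perron perturbations — on a ball $B$ with $\overline B\subset\Omega\setminus\{x_1,\dots,x_N\}$ the convex function agreeing with $u$ on $\partial B$ and solving $M(\cdot)=0$ in $B$ dominates $u$ on $B$ by Corollary~\ref{comparison_cor} yet still belongs to $\mathcal F$, hence by maximality equals $u$, forcing $Mu=0$ off the atoms, and an analogous local modification near each $x_i$ pins $Mu(\{x_i\})=\alpha_i$.

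\emph{Existence, Step 2: general $\mu$ with $\mu(\Omega)<\infty$.} Choose atomic $\mu_k$ supported in $\Omega$ with $\mu_k\rightharpoonup\mu$ weakly and $\mu_k(\Omega)\le\mu(\Omega)$ (concentrate the mass of $\mu$ on a refining grid) and let $u_k$ be the Step~1 solutions with datum $g$. Since $\mu_k(\Omega)$ is uniformly bounded, Aleksandrov's maximum principle gives a uniform two-sided bound on the $u_k$ together with a uniform $\mathrm{dist}(\cdot,\partial\Omega)^{1/n}$ modulus of continuity up to $\partial\Omega$ (again via a strict-convexity barrier and $g\in C(\partial\Omega)$), while convexity makes the $u_k$ equi-Lipschitz on compact subsets of $\Omega$; by the Arzel\`a--Ascoli theorem a subsequence converges uniformly on $\overline\Omega$ to a convex $\psi\in C(\overline\Omega)$ with $\psi=g$ on $\partial\Omega$, and the weak-$\ast$ continuity of $M$ along such sequences yields $M\psi=\lim_k Mu_k=\lim_k\mu_k=\mu$. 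Uniqueness is as above, and the concave solution follows by applying the result to $-\psi$ with datum $-g$.

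\emph{Main obstacle.} The genuinely delicate points are the exact attainment of the boundary datum — the only place strict convexity of $\Omega$ is used, handled by the exposed-point barriers in Step~1 and Aleksandrov's maximum principle in Step~2 — and the ``closing-up'' bound $Mu\le\mu$ in Step~1, i.e.\ that the Perron envelope carries no excess Monge--Amp\`ere mass; this rests on the perturbation lemmas for convex functions together with Corollary~\ref{comparison_cor}, and through the weak-$\ast$ continuity of $M$ it also governs the limit in Step~2.
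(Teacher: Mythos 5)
The paper does not prove this theorem; it quotes it as Theorem 1.6.2 of Guti\'errez's monograph, and your sketch is precisely the standard argument given there (Perron's method for atomic measures, weak-$\ast$ approximation plus Aleksandrov's estimate and the convergence lemma for general $\mu$, and the comparison principle for uniqueness), correct in outline with the genuinely delicate points rightly identified. The only cosmetic wrinkle is your appeal to the harmonic-lifting analogue $\overline g$ with $M\overline g=0$, whose existence is itself the $\mu=0$ case of the theorem; this is harmless since the upper bound on the Perron family already follows from the maximum principle for convex functions ($v\le\max_{\partial\Omega}v\le\max_{\partial\Omega}g$) and your boundary barriers handle the rest.
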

Before turning to the generalised comparison principle we recall the following weak convergence result for Monge--Amp\`ere measures.
\begin{lem}\label{conv_lemma_2}
Let $\Omega \subset \mathbb{R}^n$ be an open, bounded and strictly convex domain, $\mu_j $, $\mu$ be Borel measures in $\Omega $ with $\mu_j (\Omega )\leq A$ for all $j$ and some $A>0$ and $\mu_j \rightharpoonup \mu $ as $j\to \infty$, that is $\int_\Omega f \d \mu_j \to \int_\Omega f \d \mu $ for all $f\in C_0 (\Omega )$. Let $g_j,g\in C(\partial \Omega )$ be such that $\| g_j -g \|_{C(\partial \Omega)}\to 0$ as $j\to \infty$. If the family of convex functions $\{ \phi_j \} \subset C(\overline{ \Omega } )$ satisfies
\[
\begin{cases}
M\phi_j = \mu_j \qquad &\text{ in } \Omega , \\
\phi_j = g_j &\text{ on } \partial \Omega , 
\end{cases}
\]
then $\{ \phi_j \}$ contains a subsequence $\{ \phi_{j_k} \}$, such that $ \phi_{j_k}\to \phi  $ uniformly on compact subsets of $\Omega$ as $k\to \infty$, where $\phi \in C(\overline{\Omega })$ is convex and $M\phi = \mu$ in $\Omega$, $\phi=g$ on $\partial \Omega$.
\end{lem}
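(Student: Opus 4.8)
The plan is to combine three ingredients: uniform a priori bounds on $\{\phi_j\}$ coming from $\mu_j(\Omega)\le A$ and the uniform bound on $g_j$, the local compactness of families of uniformly bounded convex functions, and the weak continuity of the Monge--Amp\`ere measure under local uniform convergence of convex functions. A convenient first move is to reduce to a fixed boundary datum: by Theorem \ref{existence_thm} (applicable since $\Omega$ is strictly convex and $\mu_j(\Omega)\le A<\infty$) there is a unique convex $\Phi_j\in C(\overline\Omega)$ with $M\Phi_j=\mu_j$ in $\Omega$ and $\Phi_j=g$ on $\partial\Omega$; since $\phi_j$ and $\Phi_j$ carry the same Monge--Amp\`ere measure $\mu_j$, applying Corollary \ref{comparison_cor} to the pairs $(\phi_j,\Phi_j)$ and $(\Phi_j,\phi_j)$ gives $\|\phi_j-\Phi_j\|_{C(\overline\Omega)}=\|g_j-g\|_{C(\partial\Omega)}\to0$, so it suffices to prove the lemma with $\phi_j$ replaced by $\Phi_j$ and $g_j$ by $g$.

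Next I would establish the uniform bounds and extract a subsequence. Convexity gives $\max_{\overline\Omega}\Phi_j=\max_{\partial\Omega}g$, while the Aleksandrov maximum principle, together with $M\Phi_j(\Omega)=\mu_j(\Omega)\le A$, yields a lower bound $\Phi_j(x)\ge\min_{\partial\Omega}g-C\,\mathrm{dist}(x,\partial\Omega)^{1/n}$ with $C$ depending only on $n$, $A$ and $\mathrm{diam}\,\Omega$; hence $\{\Phi_j\}$ is bounded in $C(\overline\Omega)$ uniformly in $j$. A uniformly bounded convex function is Lipschitz on each $K\Subset\Omega$ with a constant depending only on the bound and on $\mathrm{dist}(K,\partial\Omega)$, so by Arzel\`a--Ascoli and a diagonal argument over an exhaustion of $\Omega$ there is a subsequence $\Phi_{j_k}\to\phi$ uniformly on compact subsets of $\Omega$, with $\phi$ convex.

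It remains to identify $\phi$. Since $\Phi_{j_k}\to\phi$ locally uniformly in $\Omega$ the Monge--Amp\`ere measures converge weakly, $M\Phi_{j_k}\rightharpoonup M\phi$, while $M\Phi_{j_k}=\mu_{j_k}\rightharpoonup\mu$, so $M\phi=\mu$ by uniqueness of the weak limit. For the boundary values, fix $x^*\in\partial\Omega$ and let $u_g\in C(\overline\Omega)$ be the convex solution of $Mu_g=0$, $u_g=g$ on $\partial\Omega$ (Theorem \ref{existence_thm} with $\mu\equiv0$); Corollary \ref{comparison_cor} (using $Mu_g=0\le\mu_j=M\Phi_j$) gives $\Phi_j\le u_g$, whence $\phi\le u_g$ and $\limsup_{x\to x^*}\phi(x)\le g(x^*)$. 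For the matching lower bound I would use strict convexity of $\Omega$: it guarantees that $u_g$ admits a supporting hyperplane $L$ at $x^*$ with $L(x^*)=g(x^*)$ and $L\le g$ on $\partial\Omega$. Then $\Phi_j-L$ is convex with the same Monge--Amp\`ere measure $\mu_j$ (adding an affine function only translates the normal map), is nonnegative on $\partial\Omega$, and vanishes at $x^*$, so the Aleksandrov estimate applied to $\Phi_j-L$ gives $\Phi_j(x)\ge L(x)-C\,\mathrm{dist}(x,\partial\Omega)^{1/n}$ with $C$ independent of $j$; passing to the limit in $k$ and then letting $x\to x^*$ yields $\liminf_{x\to x^*}\phi(x)\ge L(x^*)=g(x^*)$. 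Thus $\phi$ extends continuously to $\overline\Omega$ with $\phi=g$ on $\partial\Omega$, and by the reduction above $\phi_{j_k}\to\phi$ as well.

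The step I expect to be the main obstacle is the boundary analysis, specifically the uniform-in-$j$ lower bound near $\partial\Omega$: because only the total mass of $\mu_j$ is controlled, not its density, one cannot simply compare $\Phi_j$ against a barrier with absolutely continuous Monge--Amp\`ere measure, since such a barrier need not dominate a possibly singular $\mu_j$ as a measure. The way around this is to subtract the affine supporting-hyperplane barrier $L$ — whose existence is exactly where the strict convexity of $\Omega$ enters — and to rely on the Aleksandrov estimate, which is insensitive to the density and uses only $\mu_j(\Omega)\le A$. The remaining ingredients (local compactness of convex functions and weak continuity of the Monge--Amp\`ere measure under local uniform convergence) are standard and may be quoted from \cite{monge_ampere}.
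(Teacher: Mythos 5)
Your proof is correct, and it in fact supplies what the paper omits: the paper does not prove this lemma but cites \cite{monge_ampere} for the case $g_j\equiv g$ and asserts that varying boundary data is a ``straightforward generalisation'' --- your opening reduction via Corollary \ref{comparison_cor} applied in both directions to $(\phi_j,\Phi_j)$ is exactly that generalisation, and the remainder reproduces the standard compactness argument (Aleksandrov estimate, local Lipschitz bounds, weak continuity of $M$, barriers at the boundary) from the cited source. Two harmless imprecisions: the Aleksandrov estimate applies to functions vanishing on the boundary, so for $\Phi_j-L\ge 0$ on $\partial\Omega$ you should restrict to the sublevel set $\{\Phi_j-L<0\}$ before invoking it; and for merely continuous $g$ on a strictly convex $\partial\Omega$ one only gets affine minorants with $L\le g$ on $\partial\Omega$ and $L(x^*)\ge g(x^*)-\varepsilon$, which suffices after letting $\varepsilon\to 0$.
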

The above lemma is proved in \cite{monge_ampere}, pp.\ 20-21, in the case $g_j\equiv g$, $j=1,2,\ldots $. The case $g_j\not \equiv g$ follows as a straightforward generalisation.
\section{Generalised comparison principle}\label{gen_comp_princ_section}
 For $\phi \in H^2 ({\Omega } )$ let 
\[\begin{split}
A_\phi &\coloneqq \{ x\in \Omega \, : \,  D^2\phi (x) \text{ is positive definite at } x \},\\
B_\phi &\coloneqq \{ x\in \Omega \, : \,  D^2\phi (x) \text{ is negative definite at } x \}.
\end{split}
\]
We will denote by $[M\phi ]^+$ the measure that is absolutely continuous with respect to the Lebesgue measure with density $(\mathrm{det}\, D^2 \phi )^+$. Note that, because $\phi \in H^2 (\Omega )$, the H\"older inequality gives $[M\phi ]^+ (\Omega ) < \infty$. Moreover, if $\phi \in C^2 (\Omega )$ then $A_\phi $ is an open subset of $\Omega$, $\phi $ is convex on $A_\phi$ and, using (i), $[M\phi ]^+$ is equal to the Monge--Amp\`ere measure $M\phi$ when restricted to $A_\phi$. 

We also we denote by $[M\phi ]^-$ the measure that is absolutely continuous with respect to the Lebesgue measure with density $(\mathrm{det}\, D^2 (-\phi) )^+$.
\begin{theorem}\label{gen_comp_princ}{\normalfont (Generalised comparison principle)}
Let $\Omega $ be a bounded, open set in $\mathbb{R}^n$. Let $\psi \in C(\overline{\Omega })$ be a convex function in $\Omega $ with $M\psi (\Omega )<\infty$ and let $\phi\in H^2 (\Omega ) $ be such that 
\begin{equation}\label{ma_measures_ineq}
 [  M\phi ]^+ \leq M\psi \quad \text{ in } \Omega.
\end{equation}
Then 
\[
\min_{\overline{\Omega }} (\phi-\psi ) = \min_{\partial \Omega } (\phi-\psi ) .
\]
In particular, if $\phi \geq \psi$ on $\partial \Omega$ then $\phi \geq \psi $ in $\overline{\Omega }$. Similarly, if $\psi$ is concave in $\Omega $ and $\phi\in H^{2} (\Omega )$ is such that $ [  M\phi ]^-\leq M(-\psi)$ in $\Omega$, then 
\[
\max_{\overline{\Omega }} (\phi-\psi ) = \max_{\partial {\Omega} } (\phi-\psi ) .
\]
\end{theorem}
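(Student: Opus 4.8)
The plan is to argue by contradiction: suppose $\phi-\psi$ attains its minimum over $\overline\Omega$ at an interior point $z\in\Omega$ with $(\phi-\psi)(z)<m:=\min_{\partial\Omega}(\phi-\psi)$, and derive a contradiction. The full equality, and the ``in particular'' clause, then follow by applying this to $\psi$ replaced by $\psi+c$ for a suitable constant $c$, which alters neither the convexity of $\psi$ nor $M\psi$; and the concave statement follows by applying the convex one to $-\phi,-\psi$, since $[M\phi]^-=[M(-\phi)]^+$ and $\max(\phi-\psi)=-\min((-\phi)-(-\psi))$.

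The main device is the convex envelope $\Gamma$ of $\phi$, namely $\Gamma(x):=\sup\{L(x): L\ \text{affine},\ L\le\phi\ \text{on}\ \overline\Omega\}$, a convex function on the convex body $\widehat\Omega:=\overline{\mathrm{conv}\,\Omega}$ (sitting inside a large ball $B\supset\overline\Omega$). Two facts are needed. First, since $\phi-\psi$ has an interior minimum at $z$ and $\psi$ is convex, for $p\in\partial\psi(z)$ one has $\phi(x)\ge\phi(z)+(\psi(x)-\psi(z))$ on $\overline\Omega$; provided the supporting hyperplane $\psi(z)+p\cdot(x-z)$ of $\psi$ at $z$ holds globally over $\overline\Omega$ (automatic when $\Omega$ is convex, needing an argument otherwise) the affine map $x\mapsto\phi(z)+p\cdot(x-z)$ is admissible in the definition of $\Gamma$, so $\Gamma(z)=\phi(z)$, i.e. $z$ lies in the contact set $\{\Gamma=\phi\}$. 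Second, $M\Gamma$ is carried by the contact set, on which $\Gamma$ touches $\phi$ from below so that $0\le D^2\Gamma\le D^2\phi$ and hence $\det D^2\Gamma\le(\det D^2\phi)^+$; therefore $M\Gamma\le[M\phi]^+\le M\psi$ as measures, and in particular $M\Gamma(\Omega)<\infty$. For $\phi\in H^2$ rather than $C^2$ this second fact is obtained by mollifying $\phi$ and using the interior $C^{1,1}$ regularity of convex envelopes; when $n=2$ this is routine.

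When $\Omega$ is strictly convex the proof closes immediately: the convex envelope over a convex domain equals $\phi$ on $\partial\Omega$, so Corollary \ref{comparison_cor} (or, sharper, Theorem \ref{comparison_thm}) applied to the convex pair $\Gamma,\psi$ gives $\min_{\overline\Omega}(\Gamma-\psi)=\min_{\partial\Omega}(\Gamma-\psi)=\min_{\partial\Omega}(\phi-\psi)=m$, contradicting $(\Gamma-\psi)(z)=(\phi-\psi)(z)<m$. For a general bounded open $\Omega$ the same scheme must be transferred to $\widehat\Omega$, and here lies the genuinely new difficulty --- the main obstacle: on a nonconvex domain $\Gamma$ need not coincide with $\phi$ on $\partial\Omega$, so the boundary behaviour of $\Gamma-\psi$ is no longer transparent, and moreover $\psi$ is defined only on $\overline\Omega$ whereas $\Gamma$ lives on the larger convex set $\widehat\Omega$. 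One therefore has to manufacture, on $\widehat\Omega$ (or on the ball $B$), an auxiliary convex function playing the role of $\psi$ --- dominating the relevant affine minorants, having Monge--Amp\`ere mass controlled by $M\psi(\Omega)$, and with boundary values on $\partial B$ chosen so that the comparison still bites --- using the existence theorem, Theorem \ref{existence_thm} (which applies precisely because $B$ is strictly convex), together with the weak convergence Lemma \ref{conv_lemma_2} to pass to the limit, and then apply the strong comparison principle, Theorem \ref{comparison_thm}, to $\Gamma$ and this auxiliary function. I expect the construction of that auxiliary convex function --- so that $z$ still registers as an interior minimum lying strictly below the boundary, while $[M\phi]^+\le M\psi$ survives the limiting procedure --- to be the crux of the argument, and the point at which the extension to nonconvex domains genuinely departs from the convex case.
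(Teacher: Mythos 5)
Your convex-envelope (ABP-style) scheme is a genuinely different route from the paper's, and for a \emph{strictly convex} $\Omega$ and sufficiently regular $\phi$ it would indeed close as you describe. But the proof stops exactly where the theorem's content begins: the nonconvex case. You correctly identify that on a nonconvex $\Omega$ the envelope $\Gamma$ lives on $\overline{\conv\,\Omega}$, need not agree with $\phi$ on $\partial\Omega$, and has no comparison partner there, and you then defer the construction of the auxiliary convex function that would fix this (``I expect the construction\ldots to be the crux''). That construction is not carried out, so the proof is incomplete at its essential step. There is a second, related gap you underestimate: your ``first fact'' requires the supporting hyperplane of $\psi$ at $z$ to be a \emph{global} affine minorant on $\overline\Omega$. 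For a function that is convex in the paper's sense (convex along segments contained in $\Omega$) on a nonconvex --- e.g.\ annular or disconnected --- domain, the local supporting hyperplane need not extend globally, so $\Gamma(z)=\phi(z)$ can fail; this is not a routine repair. Finally, the contact-set estimate $M\Gamma\le[M\phi]^+$ is an ABP-type fact needing $W^{2,n}$ regularity; it is defensible for $n=2$ but not for general $n$ with only $\phi\in H^2$ as in the statement (the paper has a comparable implicit restriction, so this is a minor point).

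It is worth seeing how the paper avoids your obstacle entirely: the argument is made \emph{local}. One first adds $\varepsilon_0|x-x_0|^2$ to $\psi$ so that the interior minimum of $\phi-\widetilde\psi$ at some $\widetilde x$ is strict relative to $\partial\Omega$. For $\phi\in C^2$ the strong comparison principle (Theorem \ref{comparison_thm}) rules out $\widetilde x\in A_\phi$, so $D^2\phi(\widetilde x)$ has a nonpositive eigenvalue, and a one-dimensional Taylor expansion along the corresponding eigendirection --- using only the \emph{local} supporting hyperplane of $\psi$ at $\widetilde x$ --- contradicts $\lambda\le 0<\varepsilon_0$. For $\phi\in H^2$ one works on a small ball $B\ni\widetilde x$ with $\overline B\subset\Omega$: mollify $\phi$, solve $M\psi_j=[M\phi_j]^+$ in $B$ with $\psi_j=\phi_j$ on $\partial B$ (Theorem \ref{existence_thm} applies because $B$, not $\Omega$, is strictly convex), use the $C^2$ case to get $\psi_j\le\phi_j$, pass to the limit with Lemma \ref{conv_lemma_2}, and derive an interior minimum of $\Psi-\widetilde\psi$ in $B$, contradicting Theorem \ref{comparison_thm}. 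Because every global ingredient is replaced by its counterpart on $B$, the geometry of $\Omega$ never enters --- which is precisely the difficulty your global envelope construction cannot yet circumvent.
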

We give a proof that does not use the solvability result of the Monge--Amp\`ere equation on $\Omega$, and so does not require strict convexity of $\Omega$ (see Theorem \ref{existence_thm}). Instead we replace it with the solvability result on a neighbourhood $B$ of a point in $\Omega$ and an application of the strong comparison principle (Theorem \ref{comparison_thm}). Since the resulting proof is therefore local in nature - it does not use any global properties of $\Omega$ - it allows for $\Omega$ to be nonconvex. (In fact the original proof due to \cite{rauch_taylor} does not use strict convexity of $\Omega$ when $\phi $ is assumed to be $ C^2( \Omega )$; but for $\phi \in H^2$ their approximation argument requires the solvability result (Theorem \ref{existence_thm}), which is only valid for $\Omega $ strictly convex.)
\begin{proof} We focus on the case of $\psi$ convex; the case of concave $\psi$ follows by replacing $\phi$, $\psi$ by $-\phi$, $-\psi$ respectively. Assume first that $\phi \in C^2 (\Omega )$ (here we can follow \cite{rauch_taylor}). Suppose otherwise that there exists $x_0 \in \Omega $ such that 
\[(\phi-\psi) (x_0) = \min_{x \in \overline{ \Omega }} (\phi-\psi )(x) \]
and consider the function 
\eqnb\label{def_of_psi_tilde}
\widetilde{\psi} (x) \coloneqq \psi(x) + \varepsilon_0 |x-x_0 |^2
\eqne
for $\varepsilon_0 >0$. Since $\Omega $ is bounded, it is clear that for $\varepsilon_0$ sufficiently small the function $\phi-\widetilde{\psi}$ still does not attain its minimum on $\partial \Omega $. This means that for such an $\varepsilon_0$ fixed there exists $\widetilde{x} \in \Omega$ such that 
\eqnb\label{x1}
(\phi-\widetilde{\psi} ) (\widetilde{x}) =  \min_{x \in \overline{ \Omega }} (\phi-\widetilde{\psi}  )(x) < \min_{x \in \partial { \Omega }} (\phi-\widetilde{\psi}  )(x).
\eqne
Moreover, because on $A_\phi$ both $\widetilde{\psi}$ and $\phi$ are convex and
\[
M\widetilde{\psi} \geq M\psi \geq M\phi,
\]
the strong comparison principle (Theorem \ref{comparison_thm}) gives $\widetilde{x} \not \in A_\phi$. In other words $D^2 \phi (\widetilde{x} )$ is not positive definite. Now, because any symmetric matrix is positive definite if and only if all its eigenvalues are positive (see e.g.\ Theorem 7.2.1 in \cite{horn_johnson}), we see that $D^2 \phi(\widetilde{x})$ has at least one nonpositive eigenvalue. Let $\lambda \leq 0$ be one such eigenvalue and let $\alpha \in \mathbb{R}^n$, $|\alpha |=1$, be the respective eigenvector. Then, by performing a Taylor expansion in the $\alpha$ direction, we can write, for $t\in \mathbb{R}$ with $|t|$ small
\begin{equation}\label{utaylor}
\phi(\widetilde{x} + t \alpha ) - \phi (\widetilde{x} ) = a_1 t  + \lambda t^2 + o(t^2),
\end{equation}
where $a_1  \in \mathbb{R}$ and $o( \cdot ) : \mathbb{R} \to \mathbb{R}$ denotes any function such that $o(y)/y \stackrel{y\to 0}{\longrightarrow } 0$. As $\psi$ is convex, it has a supporting hyperplane at $\widetilde{x}$ (see 4) ). Hence 
\[
\begin{split}
\widetilde{\psi} (\widetilde{x} + t \alpha ) - \widetilde{\psi} (\widetilde{x}) &= \psi(\widetilde{x} + t \alpha ) - \psi( \widetilde{x}) + \varepsilon_0 ( |\widetilde{x} + t\alpha - x_0 |^2 - |\widetilde{x} - x_0 |^2 )\\
&\geq  a_2 t + \varepsilon_0 ( |\widetilde{x}+ t\alpha - x_0 |^2 - |\widetilde{x} - x_0 |^2 )= a_3 t + \varepsilon_0 \, t^2, 
\end{split}
\]
where $a_2,a_3 \in \mathbb{R}$. Combining this with (\ref{utaylor}) and using \eqref{x1}, we obtain
\[
(\phi-\widetilde{\psi} ) (\widetilde{x})  {\leq } (\phi-\widetilde{\psi} ) (\widetilde{x}+ t \alpha )\leq  (\phi-\widetilde{\psi} ) (\widetilde{x} ) +(a_1-a_3)t+ (\lambda - \varepsilon_0 ) t^2 + o(t^2)
\]
for small values of $|t|$. This means that the quadratic polynomial \[(a_1-a_3)t + (\lambda - \varepsilon_0 ) t^2 \] attains its minimum at $t=0$. Hence $a_1=a_3$ and $\lambda - \varepsilon_0 \geq 0$, which contradicts $\lambda \leq 0< \varepsilon_0$.\\

Now let $\phi \in H^{2} (\Omega )$, and similarly as before consider $\widetilde{\psi }$ and $\widetilde{x}$ given by \eqref{def_of_psi_tilde} and \eqref{x1}. Let $B$ be an open ball centered at $\widetilde{x}$ and such that $\overline{B}\subset \Omega $. 

Let $\{ \phi_j \} \subset C_0^\infty (\mathbb{R}^2)$ be such that $\| \phi_j - \phi \|_{H^2 (B)}\to 0$ as $j\to \infty$. By the embedding $H^2 (B ) \subset C^0 (\overline{B } )$ we also have $\| \phi_j - \phi \|_{C^0 (\overline{B})}\to 0$ as $j\to \infty$. Let $\mu_j$, $\mu$ be Borel measures on $B$ defined by $\mu_j := [M\phi_j]^+$, $\mu := [M\phi]^+$ (note that $\mu_j (\Omega), \mu (\Omega) < \infty$ due to the H\"{o}lder inequality). For each $j$ let $\psi_j$ be the unique convex solution of the Dirichlet problem
\[
\begin{cases}
M\psi_j = \mu_j \qquad &\text{ in } B , \\
\psi_j = \phi_j &\text{ on } \partial B . 
\end{cases}
\]
The existence of such $\psi_j$ is guaranteed by the existence theorem (Theorem \ref{existence_thm}). Because $\phi_j \in C^2$, the first part gives 
\begin{equation}\label{ineq_on_ball}
\psi_j \leq \phi_j \quad \text{ in }\overline{B}.
\end{equation}
Furthermore, because $\| (\mathrm{det}\, D^2 \phi_j )^+-(\mathrm{det}\, D^2 \phi )^+ \|_{L^1(B)}\to 0$ as $j\to \infty$ gives $\mu_j \rightharpoonup \mu$, and because $\| \phi_j - \phi \|_{C^0 (\partial {B})}\to 0$, we can use the convergence lemma (Lemma~\ref{conv_lemma_2}) to obtain  that $\psi_j \to \Psi$ uniformly on compact subsets of $B$ for some subsequence (which we relabel), where $\Psi\in\nobreak C^0 (\overline{B})$ is convex and satisfies
\[
\begin{cases}
M\Psi = \mu \qquad &\text{ in } B , \\
\Psi = \phi &\text{ on } \partial B . 
\end{cases}
\]
Taking the limit $j\to \infty$ in \eqref{ineq_on_ball} we get $\Psi \leq \phi$ on $\overline{B}$ and so in particular $\Psi (\widetilde{x})\leq \phi (\widetilde{x})$ and
\eqnb\label{temp33}
(\Psi - \widetilde{\psi }) (\widetilde{x}) \leq (\phi - \widetilde{\psi }) (\widetilde{x}) = \min_{\overline{\Omega}} (\phi - \widetilde{\psi }) \leq \min_{\partial B } (\phi - \widetilde{\psi }) = \min_{\partial B } (\Psi - \widetilde{\psi }) .
\eqne
Because $M\Psi = \mu  = [M\phi ]^+ \leq M\psi \leq M\widetilde{\psi}$ on $B$ and both $\Psi$ and $\widetilde{\psi}$ are convex we can use the comparison principle (Corollary \ref{comparison_cor}) to write $\min_{\partial B } (\Psi - \widetilde{\psi })=\min_{\overline{B} } (\Psi - \widetilde{\psi })$. Therefore \eqref{temp33} becomes
\[
(\Psi - \widetilde{\psi }) (\widetilde{x}) \leq \min_{\overline{B} } (\Psi - \widetilde{\psi }),
\]
that is $\Psi - \widetilde{ \psi }$ admits an internal minimum in $B$. This contradicts the strong comparison principle (Theorem \ref{comparison_thm}).
\end{proof}
An immediate consequence of the generalised comparison principle is that a solution to the Monge--Amp\`ere equation with sign-changing right-hand side can be bounded above and below by, respectively, the concave and the convex solutions of certain Monge--Amp\`ere problems.
\begin{cor}\label{mainthm}
Let $\Omega $ be a bounded, open subset of $\mathbb{R}^n$. If $\phi \in H^2 (\Omega )$, $\Phi_{conv}$ is a convex generalised solution to $\mathrm{det}\, D^2 \Phi_{conv} = \left( \mathrm{det}\, D^2 \phi \right)^+$ and $\Phi_{conc}$ is a concave generalised solution to $\mathrm{det}\, D^2 (-\Phi_{conc}) = \left( \mathrm{det}\, D^2 (-\phi ) \right)^+$ such that $ \Phi_{conv} = \Phi_{conc} = \phi$ on $\p \Omega$, then
\[
\Phi_{conv} \leq \phi \leq \Phi_{conc} \quad \text{ in }\overline{\Omega }.
\]
\end{cor}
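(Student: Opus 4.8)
The plan is to derive each of the two inequalities by a single application of the generalised comparison principle (Theorem~\ref{gen_comp_princ}); the whole proof is then just a matter of checking that its hypotheses hold in each case, and in fact the Monge--Amp\`ere measure condition will hold with \emph{equality}.

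First I would treat the lower bound $\Phi_{conv}\le\phi$ by applying the convex case of Theorem~\ref{gen_comp_princ} with $\psi:=\Phi_{conv}$. The function $\Phi_{conv}$ is convex by hypothesis and, being a generalised solution attaining the boundary data $\phi$ on $\partial\Omega$, belongs to $C(\overline{\Omega})$; moreover $M\Phi_{conv}(\Omega)=\int_\Omega(\det D^2\phi)^+\,\d x<\infty$ by the H\"older inequality, since $\phi\in H^2(\Omega)$. The remaining hypothesis $[M\phi]^+\le M\Phi_{conv}$ in $\Omega$ holds with equality: by the definition of a generalised solution, $M\Phi_{conv}$ is precisely the measure that is absolutely continuous with respect to Lebesgue measure with density $(\det D^2\phi)^+$, i.e. $M\Phi_{conv}=[M\phi]^+$. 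Theorem~\ref{gen_comp_princ} then gives $\min_{\overline{\Omega}}(\phi-\Phi_{conv})=\min_{\partial\Omega}(\phi-\Phi_{conv})$, and the right-hand side equals $0$ because $\phi=\Phi_{conv}$ on $\partial\Omega$; hence $\phi\ge\Phi_{conv}$ on $\overline{\Omega}$.

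The upper bound $\phi\le\Phi_{conc}$ follows in exactly the same way from the concave case of Theorem~\ref{gen_comp_princ} with $\psi:=\Phi_{conc}$: here $\Phi_{conc}$ is concave and in $C(\overline{\Omega})$, the measure $M(-\Phi_{conc})$ is finite on $\Omega$ again by $\phi\in H^2(\Omega)$ and H\"older, and since $\Phi_{conc}$ is a generalised concave solution of $\det D^2(-\Phi_{conc})=(\det D^2(-\phi))^+$ we have $M(-\Phi_{conc})=[M\phi]^-$, so the hypothesis $[M\phi]^-\le M(-\Phi_{conc})$ holds (again with equality). Theorem~\ref{gen_comp_princ} then yields $\max_{\overline{\Omega}}(\phi-\Phi_{conc})=\max_{\partial\Omega}(\phi-\Phi_{conc})=0$, i.e. $\phi\le\Phi_{conc}$ on $\overline{\Omega}$. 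Combining the two bounds gives $\Phi_{conv}\le\phi\le\Phi_{conc}$ on $\overline{\Omega}$, as claimed.

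I do not expect a genuine obstacle: the corollary is essentially a direct reading of Theorem~\ref{gen_comp_princ}. The only points requiring (routine) care are the identification of the prescribed right-hand sides with the measures $[M\phi]^{\pm}$, so that the Monge--Amp\`ere measure hypotheses of Theorem~\ref{gen_comp_princ} are met; the finiteness of the relevant Monge--Amp\`ere measures, which follows from $\phi\in H^2(\Omega)$ and the H\"older inequality exactly as noted before Theorem~\ref{gen_comp_princ}; and the (implicit, standard) assumption that $\Phi_{conv},\Phi_{conc}\in C(\overline{\Omega})$, which is automatic when these functions are produced via the existence theorem (Theorem~\ref{existence_thm}) on a strictly convex $\Omega$.
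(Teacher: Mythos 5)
Your proposal is correct and follows essentially the same route as the paper: a direct application of Theorem~\ref{gen_comp_princ} with $\psi=\Phi_{conv}$ (convex case) and $\psi=\Phi_{conc}$ (concave case), noting that the Monge--Amp\`ere measure hypotheses hold with equality and that the measures are finite by $\phi\in H^2(\Omega)$ and H\"older. Your version is simply more explicit than the paper's one-line argument about identifying $M\Phi_{conv}=[M\phi]^+$ and $M(-\Phi_{conc})=[M\phi]^-$.
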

\begin{proof}
This follows from the generalised comparison principle (Theorem \ref{gen_comp_princ}) since $M\phi \leq M\Phi_{conv}= M(-\Phi_{conc}) $ and 
\[ M\Phi_{conv} (\Omega )= M(-\Phi_{conc})(\Omega ) = \| \left( \mathrm{det}\, D^2 (-\phi ) \right)^+ \|_{L^1} \leq C \| \phi \|_{H^2} < \infty. \qedhere \]
\end{proof}
Note that if $\Omega $ is strictly convex then the functions $\Phi_{conv}$, $\Phi_{conc}$ are uniquely determined by the existence theorem (Theorem \ref{existence_thm}). What is more, if $n$ is even then $\mathrm{det} \, D^2 (-\phi )= \mathrm{det} \, D^2 \phi  $ and hence $\Phi_{conv}$ and $\Phi_{conc}$ are solutions to the same problem
\[
\begin{cases}
\mathrm{det}\, D^2 \Phi = \left( \mathrm{det}\, D^2 \phi \right)^+ \quad & \text{ in } \Omega , \\
\Phi=\phi &\text{ on } \partial \Omega .
\end{cases}
\] 
In other words, if $n$ is even, then any $\phi \in H^2(\Omega )$ can be bounded below and above using functions $\Phi_{conv}$ and $\Phi_{conc}$ which depend only on the positive part of $\mathrm{det}\, D^2 \phi$ and the boundary values of $\phi$. The power of Corollary \ref{mainthm} is demonstrated by the following nonexistence result.
\begin{cor}\label{main_result}
Let $n$ be even, $\Omega $ a bounded, open subset of $\mathbb{R}^n$, $C\in \mathbb{R}$ and $f$ a nonpositive function such that $f\not \equiv 0$. Then the problem
\[
\begin{cases}
\mathrm{det}\, D^2 \phi = f\qquad & \text{ in } \Omega , \\
\phi=C &\text{ on } \partial \Omega 
\end{cases}
\]
has no $H^2 (\Omega ) $ solution.
\end{cor}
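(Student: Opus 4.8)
The plan is to argue by contradiction, the whole weight of the argument being carried by Corollary~\ref{mainthm} applied with the \emph{constant} function $C$ in the roles of both $\Phi_{conv}$ and $\Phi_{conc}$. So suppose $\phi\in H^2(\Omega)$ solves $\det D^2\phi=f$ in $\Omega$ with $\phi=C$ on $\partial\Omega$, and let us produce the contradiction.

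First I would record what the hypotheses on $f$ and $n$ buy us at the level of Monge--Amp\`ere measures. Since $f\le 0$ a.e., the pointwise Hessian of $\phi$ satisfies $(\det D^2\phi)^+=f^+=0$ a.e.\ in $\Omega$; and since $n$ is even, $\det D^2(-\phi)=(-1)^n\det D^2\phi=\det D^2\phi=f$, so also $(\det D^2(-\phi))^+=0$ a.e. Hence both $[M\phi]^+$ and $[M\phi]^-$ are the zero measure on $\Omega$. Next I would check that the constant function $\Phi(x)\equiv C$ is simultaneously an admissible $\Phi_{conv}$ and $\Phi_{conc}$: it lies in $C(\overline\Omega)$ and is both convex and concave, its normal mapping is $\partial\Phi(x)=\{0\}$ for every $x$, so $M\Phi(E)=|\partial\Phi(E)|\le|\{0\}|=0$ for every Borel $E\subset\Omega$, i.e.\ $M\Phi\equiv 0\equiv[M\phi]^+$ and $M(-\Phi)\equiv 0\equiv[M\phi]^-$; and of course $\Phi=C=\phi$ on $\partial\Omega$. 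Thus all hypotheses of Corollary~\ref{mainthm} are met with $\Phi_{conv}=\Phi_{conc}=\Phi\equiv C$, and the corollary gives $C=\Phi_{conv}\le\phi\le\Phi_{conc}=C$ in $\overline\Omega$, that is $\phi\equiv C$. But then $D^2\phi=0$ a.e.\ in $\Omega$, so $f=\det D^2\phi=0$ a.e., contradicting $f\not\equiv 0$.

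There is essentially no analytic obstacle here; the only points requiring a little care are the verification that the constant function really is an admissible choice in Corollary~\ref{mainthm} (i.e.\ that its Monge--Amp\`ere measure vanishes identically and therefore coincides with $[M\phi]^\pm$, which uses property~(ii) only trivially) and the parity bookkeeping $(\det D^2(-\phi))^+=(\det D^2\phi)^+$, which is exactly the place where the assumption ``$n$ even'' is used. Everything else is immediate once Corollary~\ref{mainthm} is in hand.
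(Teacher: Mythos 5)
Your proof is correct and follows essentially the same route as the paper: take $\Phi_{conv}=\Phi_{conc}\equiv C$, apply Corollary~\ref{mainthm} to conclude $\phi\equiv C$, and derive the contradiction $f\equiv 0$. The extra verifications you supply (that $M\Phi\equiv 0$ matches $[M\phi]^{\pm}$, and the parity identity $\det D^2(-\phi)=\det D^2\phi$ for $n$ even) are exactly the details the paper leaves implicit, so nothing is missing or different in substance.
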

\begin{proof}
Suppose there exists $\phi \in H^2 ({\Omega })$, a solution of the above problem. The constant function $\Phi \equiv C$ satisfies $\mathrm{det}\,D^2 \Phi = 0 = f^+$ with $\left. \Phi \right|_{\partial \Omega } = C$. Therefore, by Corollary~\ref{mainthm}, $C \leq \phi \leq C$, i.e. $\phi\equiv C$. Hence $0 \equiv \mathrm{det} \, D^2 \phi \equiv f  \not \equiv 0$, a contradiction.
\end{proof}
\section{An application to the 2D Navier--Stokes equations}\label{application_NS_section}
Let us consider the two-dimensional Navier--Stokes equations
\[
u_t + (u \cdot \nabla ) u -  \Delta u + \nabla p =  0
\]
at any $t>0$ equipped with incompressibility constraint $\mathrm{div}\, u =0$. Taking the divergence of the equations and using the incompressibility constraint we obtain
\[
\nabla \cdot \left[ (u \cdot \nabla ) u\right]  +\Delta p =0 .
\]
Now, because any divergence-free $2$D vector field can be represented as $u=(\phi_y,-\phi_x)$ for some scalar function $\phi$, we can write
\[\begin{split}
-\Delta p &= \p_x (u_2 \p_x u_1 + u_1 \p_x u_1) + \p_y (u_2 \p_y u_2 + u_1 \p_x u_2 ) \\ 
&= \p_x (-\phi_x \phi_{yy} +\phi_y \phi_{xy} ) + \p_y (\phi_x \phi_{xy} - \phi_y \phi_{xx}) = -2 \phi_{xx} \phi_{yy} + 2 (\phi_{xy})^2,
\end{split}
\]
that is
\begin{equation}\label{preMA}
 \phi_{xx}  \phi_{yy} - (\phi_{xy})^2 = \frac{1}{2} \Delta p .
\end{equation}
This is the Monge--Amp\`{e}re equation
\begin{equation}\label{MA}
\mathrm{det}\, D^2\phi=\frac{1}{2}\Delta p .
\end{equation}
This connection between the pressure $p$ and velocity $u$ in $2$D Navier--Stokes equations was first studied by Larchev\^{e}que (1990 \& 1993), who also observed that in the regions of positive $\Delta p$ the velocity $u$ has closed streamlines, which he related to the appearance of {coherent structures}. In contrast to this local analysis, here we use the results of the previous section to show that if $\Delta p \not \equiv 0$ then it is not possible that $\Delta p \leq 0$ throughout $\Omega$. Indeed, because the global-in-time solution $(u,p)$ of the two-dimensional Navier-Stokes equations is smooth, given $C^1$ regularity of $\p \Omega$, we have in particular that $u\in H^1_0 (\Omega ) $, that is $\phi \in H^2 (\Omega )$ and $\nabla \phi = 0 $ on $\partial \Omega  $. In particular $\left.  \phi \right|_{\partial \Omega}=C$ for some $C\in \mathbb{R}$ and from the last corollary we obtain that $\Delta p \leq 0$ (with $\Delta p  \ne 0$) cannot hold throughout $\Omega$.\\

We also note that $\Delta p>0$ cannot throughout $\Omega$, which can be shown using elementary methods. Indeed, because the solution $(u,p)$ to the $2$D Navier--Stokes equations is smooth (see e.g. \cite{lions_prodi}, Section 3.3 of \cite{temam} or Section 9.6 of \cite{robinson_red_book}) we have in particular that $u\in C^1(\Omega )$, that is $\phi \in C^2 (\Omega )$. Therefore, if $\Delta p>0$ we can follow an idea from Section IV.6.3 of \cite{courant_hilbert} to write, using \eqref{preMA},
\[\phi_{xx} \phi_{yy} \geq  \mathrm{det}\, D^2 \phi = \frac{1}{2} \Delta p >0\]
and we see (by continuity) that either
\begin{equation}\label{samesign}
\phi_{xx}, \phi_{yy} >0 \text{ in }\Omega \quad \text{ or } \quad \phi_{xx}, \phi_{yy} <0 \text{ in }\Omega .
\end{equation}
Supposing that $\phi_{xx}, \phi_{yy} >0$, we can use the divergence theorem to obtain
\[
0< \int_\Omega  \Delta \phi \,\mathrm{d}x \,\mathrm{d} y = \int_{\partial \Omega } \frac{\partial \phi }{\partial \nu } \mathrm{d}S =0,
\]
a contradiction; we argue similarly if $\phi_{xx}, \phi_{yy} <0$. 

Therefore, if at any time $t>0$ we have $\Delta p \not \equiv 0$ then either $\Delta p $ changes sign inside the domain or $\Delta p \geq 0$ with $\Delta p \not > 0$. In either case $\Delta p=0$ at some interior point of the domain.

One of the questions related to the connection of the pressure $p$ and velocity $u$ in the $2$D incompressible Navier--Stokes equations is whether the pressure determines the velocity uniquely (see the review article \cite{robinson_2D_NSE}). The answer to this question is negative, as the following example shows. \vspace{0.5cm} \\
\textbf{Example} Consider the shear flow $u(x,y,t) = (U(y,t),0)$ in a channel $\Omega \coloneqq \mathbb{T} \times [0,1]$, where $U$ satisfies the $1$D heat equation $U_t-U_{yy}=0$ in $[0,1]\times [0,\infty)$ with boundary conditions $U(0,t)=U(1,t)=0$. Note that $U(y,t):=C\mathrm{e}^{-k^2t} \sin (kx)$ is a solution of this problem for any $C\ne 0$, $k\in \mathbb{N}$. Then the pair $(u,p)$, where $p\equiv 0$, satisfies the $2$D incompressible Navier--Stokes equations as $\mathrm{div} \, u$ vanishes and $u_t-\Delta u + (u \cdot \nabla )u +\nabla p = u_t - u_{yy}=0$.\\

This example also illustrates the relevance of boundary conditions in Corollary \ref{mainthm}. Indeed, if the periodic boundary condition (in $x$) was replaced by the homogeneous Dirichlet boundary condition, then Corollary \ref{mainthm} implies that the only velocity field $u$ corresponding to $p\equiv 0$ is $u\equiv 0$.

\section{Acknowledgements}
I would like to thank James Robinson for his support and for the suggestion of the shear flow example in Section \ref{application_NS_section}. This research is supported by EPSRC as part of the MASDOC DTC at the University of Warwick, Grant No. EP/HO23364/1.
\thispagestyle{empty}
\bibliography{liter}
\thispagestyle{empty}
\end{document}